\documentclass[a4paper,12pt]{amsart}
 
\usepackage{amssymb, amsmath, amsthm, amsfonts}
\usepackage{extsizes}
\usepackage{hyperref}
\usepackage{xcolor,graphicx}

\theoremstyle{plain}
\newtheorem{theorem}{Theorem}[section]

\newtheorem{corollary}[theorem]{Corollary}
\newtheorem{proposition}[theorem]{Proposition}

\theoremstyle{definition}
\newtheorem{definition}[theorem]{Definition}
\newtheorem{remark}[theorem]{Remark}

\newtheorem*{problem}{Problem} 

 \newcommand{\bfr}{\mathbf{r}}
\newcommand{\bfa}{\mathbf{a}}
\newcommand{\bft}{\mathbf{t}}
\newcommand{\bfn}{\mathbf{n}}
\newcommand{\bfb}{\mathbf{b}}

 \def\cy{\mathcal C}
\def\r{\mathbb R}

\usepackage{xcolor,graphicx}

\begin{document}

\title[Cylindrical curves in terms of curvature and torsion]{A necessary condition for cylindrical curves in terms of curvature and torsion}

\author[R. L\'opez]{Rafael L\'opez}
\address{Rafael L\'opez \\ Department of Geometry and Topology\\ University of Granada. 18071 Granada.   Spain}
\email{rcamino@ugr.es}

\begin{abstract}
 We establish necessary conditions for a regular curve to lie on a circular cylinder in terms of its curvature $\kappa$ and torsion $\tau$. By identifying a fundamental function $\psi = \sin^2 \alpha$, representing the squared sine of the angle between the tangent vector and the axis of the cylinder,  we reduce the geometric inclusion problem to a compatibility condition between  an explicit eighth-degree polynomial equation  and a   differential equation for $\psi$. This approach yields a single  ODE involving only $\kappa$ and $\tau$ that governs the inclusion of the curve in the cylinder.     The robustness of  this framework is   demonstrated through specific examples of cylindrical curves. Furthermore, we analyze the case of curves with constant curvature $\kappa_0$, obtaining an explicit ODE for  the torsion.   Remarkably, we prove that if $\kappa_0 = 1/\rho$, this equation admits   an explicit, exact solution for $\tau$.\end{abstract}
\subjclass[2020]{Primary 53A04, 53A05, 53B20}
\keywords{cylindrical curve, Frenet frame, curvature, torsion}

\maketitle

\section{Introduction and statement of the problem}\label{sec1}

In this paper, we investigate the following question:

\begin{problem}
Given a circular cylinder $\cy$ in Euclidean space $\r^3$, how can one characterize a regular curve $\bfr\colon I\subset\r\to\r^3$ lying on $\cy$  in terms of its curvature $\kappa$ and   torsion $\tau$? 
\end{problem}
 
An answer to this problem has   interesting  applications  in computer vision  \cite{fo,ij,jt,krsw,kp}.   Suppose that we know  the  trajectory of a point in $\r^3$ by tracking its coordinates at discrete positions. From this information, we can determine   the curvature and the torsion of the curve that describes the path of these points. The question to be answered is whether such a set of  points is contained in  some circular cylinder in $\r^3$, or in a circular cylinder with a specific radius.  The main challenge is to deduce whether a curve lies on  a two-dimensional surface solely from  its one-dimensional  intrinsic invariants $\kappa$ and $\tau$. In general, a regular curve $\bfr\colon I\subset\r\to\r^3$ that is  not a straight line  has a well-defined   curvature $\kappa>0$ and   torsion $\tau$. Both geometric quantities  uniquely determine  the curve $\bfr$ up to a rigid motion of $\r^3$.

Although the formulation of this problem is simple, its solution, if any,  is surprisingly complex.  
If we replace the cylinder $\cy$  with another surface $S$, the problem has been fully solved for only two specific surfaces: the plane and the  sphere. Indeed, if $S$ is a plane of $\r^3$, then a curve $\bfr$ lies   in $S$ if and only if its torsion $\tau$ vanishes identically. On the other hand, if $S$ is    a  sphere   of radius $r>0$, then   a non-planar curve $\bfr$ is contained in $S$ if and only if  
\begin{equation}\label{es}
\frac{1}{\kappa^2}+\frac{\kappa'^2}{\kappa^4\tau^2}=r^2,
\end{equation}
where the prime $(')$ stands for the derivative with respect to the arc length. This is a first-order ODE   in $\kappa$. If the radius of the sphere is unknown, then   $\bfr$ is  contained in some sphere of $\r^3$ if and only if $\kappa$ and $\tau$ satisfy the second-order ODE in $\kappa$ given by 
\begin{equation}\label{es2}
\frac{\tau}{\kappa}-\left(\frac{\kappa'}{\kappa^2\tau}\right)'=0.
\end{equation}
The proofs of \eqref{es} and \eqref{es2} appear in standard  textbooks such as \cite{dc,ku,st}.  For a rigorous global formulation and an explicit integration of these spherical conditions, we refer to the classical works of Wong \cite{wo1,wo2}. Extensions of \eqref{es} and \eqref{es2} to other ambient spaces   for curves contained in totally geodesic and umbilical surfaces have been recently developed    \cite{da,dd,lo1,pp,pm1}.

We emphasize that our problem fundamentally differs from the study of curves on a surface $S$ via their geodesic curvature and geodesic torsion. Unlike the 
standard curvature $\kappa$ and torsion $\tau$, which are intrinsic invariants of the curve itself, 
the geodesic quantities depend on the 
specific embedding of the curve within the surface. 

Returning to the initial problem regarding circular cylinders, few results have been obtained in the literature \cite{krsw,ko,sh}. 
 In \cite{krsw}, the authors  attempted to characterize cylindrical containment by computing the resultants of the differential invariants. However, their method typically leads to a resultant of degree $12$ (or higher), resulting in a symbolic expression of such overwhelming complexity that it becomes analytically unmanageable.   
 The problem was also addressed in the early work of Ko \cite{ko},  who established necessary and sufficient conditions by formulating a system of coupled differential-algebraic equations that depends on auxiliary angular variables. This approach does not achieve the complete algebraic elimination of these variables into a single, explicit polynomial in terms of the intrinsic invariants $\kappa$ and $\tau$.

More recently, Starostin and van der Heijden \cite{sh} employed a kinematic approach based on moving frames, relating the intrinsic invariants to an angle $\xi$ (measuring the rotation between the Frenet frame and a secondary frame), which also emerges as the root of an eighth-degree polynomial in $\kappa, \tau$, and $\kappa'$.

The main result of this paper is presented in Section \ref{sec2}. We prove that if a regular curve lies on a circular cylinder of radius $\rho$, then $\kappa$ and $\tau$ satisfy a specific ordinary differential equation (Theorem \ref{t1}). This equation is obtained as follows. Let $\alpha$ be the angle that the curve makes with the axis of 
the cylinder, and let $\psi = \sin^2 \alpha$. We prove that $\psi$ must simultaneously satisfy an $8$th-degree polynomial equation $\mathcal{P}(\psi) = 0$ and a differential equation $\mathcal{Q}(\psi, \psi') = 0$. 
The compatibility condition between these two equations, obtained by eliminating $\psi$, yields a single  ODE, $\mathcal{E}(\rho, \kappa, \kappa', \kappa'', \tau, \tau') = 0$. This resultant equation represents 
the fundamental constraint that the curvature and torsion must satisfy for the curve to be contained in the cylinder.

In Section \ref{s3}, we show how our result can be utilized to establish necessary conditions for curves with specific curvature or torsion profiles to lie on a circular cylinder. A particularly remarkable case arises when the curvature is assumed to be a constant $\kappa_0$.  In this scenario, the general ODE $\mathcal{E}=0$ is explicitly obtained (Theorem \ref{t2}).   We prove that, under the     condition $\kappa_0 = 1/\rho$, this equation   admits a completely explicit, exact   solution for the torsion (Corollary \ref{cor34}). Finally, in Section \ref{s4}, we illustrate the versatility of our method by applying it to classical examples, such as Viviani's curve.

\section{Characterization via the Frenet frame}\label{sec2}

In this section, we establish the main result of the paper (Theorem \ref{t1}). The strategy of the proof is constructive: we will use the Frenet equations to algebraically eliminate the direction cosines of the axis of the cylinder,  reducing the geometric condition of being a cylindrical curve to an ordinary differential equation and a polynomial equation of degree $8$.

Let $\cy\subset\r^3$ be a circular cylinder of radius $\rho>0$   whose axis has a unit direction vector $\bfa$. The cylinder is given by
\begin{equation}\label{eq00}
    \cy = \{\mathbf{x}\in\r^3\colon \|\mathbf{x}\|^2-  \langle \mathbf{x},\bfa\rangle^2 =  \rho^2\}.
\end{equation}

 Let $\bfr$ be a regular curve lying on  $\cy$, $\bfr\colon I\subset\r\to\cy$, $\bfr=\bfr(s)$, where $s$ is the arc length parameter. 
Let $\{\bft,\bfn,\bfb\}$ be the Frenet frame of $\bfr$. Let $\kappa$ and $\tau$ be its curvature and torsion, respectively. Throughout this section, we assume that the curve 
is biregular, that is, $\kappa > 0$ and $\tau \neq 0$ everywhere on $I$, to 
ensure that the Frenet frame is well-defined and to avoid singularities 
in our computations.  The Frenet equations are 
\begin{equation*}
    \begin{split}
        \bft'&=\kappa\bfn\\
        \bfn'&=-\kappa\bft+\tau \bfb,\\
        \bfb'&=-\tau\bfn.
    \end{split}
\end{equation*}
Let us express the axis $\bfa$ as a linear combination of $\{\bft,\bfn,\bfb\}$ using its direction cosines, 
\begin{equation}\label{eqa}
    \bfa = \cos\alpha\,\bft + \cos\beta\,\bfn + \cos\gamma\,\bfb.
\end{equation}
By the Frenet equations, the derivatives of the direction cosines are
 \begin{equation}\label{ss}
 \begin{split}
     (\cos\alpha)'&=\kappa\cos\beta,\\
    ( \cos\beta)'&=-\kappa\cos\alpha+\tau\cos\gamma,\\
     (\cos\gamma)'&=-\tau\cos\beta.
 \end{split}
 \end{equation}

Differentiating  the equation 
\begin{equation}\label{eq0}
\|\bfr\|^2-\langle\bfr,\bfa\rangle^2 = \rho^2
\end{equation}
with respect to $s$, we  deduce that
\begin{equation}\label{eq1}
    \langle\bft,\bfr\rangle-\langle\bfr,\bfa\rangle\langle\bft,\bfa\rangle = 0, 
\end{equation}
that is,
$$\langle \bft,\bfr-\langle\bfr,\bfa\rangle\bfa\rangle=0.$$
This implies that $\bfr-\langle\bfr,\bfa\rangle\bfa$ is a linear combination of $\bfn$ and $\bfb$. Thus, there are two functions of $s$, $A(s)$ and $B(s)$, such that 
\begin{equation}\label{eq2}
\bfr-\langle\bfr,\bfa\rangle\bfa=A\bfn+B\bfb.
\end{equation}
\begin{remark}
While the cylindrical condition \eqref{eq2} is formally identical to the   spherical relation $\mathbf{r}-\mathbf{c}_0 = \alpha \mathbf{n} + \beta \mathbf{b}$, a fundamental geometric difference makes the cylindrical problem significantly harder. For a sphere, the center $\mathbf{c}_0$ is a fixed point ($\mathbf{c}_0' = \mathbf{0}$), yielding the relatively simple ODE \eqref{es2}.   In contrast, the term  $\langle\bfr,\bfa\rangle\bfa$ in \eqref{eq2} is not constant.  This introduces an additional degree of freedom, which is precisely why the cylindrical characterization escalates into a   highly  difficult problem. 
\end{remark}

Multiplying \eqref{eq2} by $\bfn$ and $\bfb$, and using \eqref{eqa}, we obtain the following expressions for $A$ and $B$, 
\begin{equation}\label{eq3}
\begin{split}
A&=\langle \bfr-\langle\bfr,\bfa\rangle \bfa,\bfn\rangle=\langle\bfr,\bfn\rangle-\cos\beta\langle\bfr,\bfa\rangle,\\
B&=\langle \bfr-\langle\bfr,\bfa\rangle \bfa,\bfb\rangle=\langle\bfr,\bfb\rangle-\cos\gamma\langle\bfr,\bfa\rangle.
\end{split}
\end{equation}

 By using \eqref{eq1} and the definitions of $A$ and $B$ in \eqref{eq3}, we obtain the derivatives of $A$ and $B$, namely,  
\begin{eqnarray}
 A' &=&  -\kappa\langle\bfr,\bft\rangle + \tau \langle\bfr,\bfb\rangle -\cos\alpha\cos\beta -(-\kappa\cos\alpha+\tau\cos\gamma)\langle\bfr,\bfa\rangle\nonumber\\
 &=&\tau B -\cos\alpha\cos\beta.\label{ab1}\\
 B'& =&  -\tau \langle\bfr ,\bfn\rangle - \cos\gamma\cos\alpha+\tau\cos\beta \langle\bfr,\bfa\rangle\nonumber	\\	
 &=&-\tau A- \cos\gamma\cos\alpha.\label{ab2}
\end{eqnarray}

Differentiating \eqref{eq1}, and using \eqref{eqa} again,  yields
\[    0  = 1+ \kappa \langle\bfn,\bfr\rangle-\cos^2\alpha + \langle \bfr, \bfa\rangle\kappa \cos\alpha= \kappa A + 1 - \cos^2\alpha.
\]
This gives 
\begin{equation}\label{eq4}
    A = - \frac{\sin^2\alpha}{\kappa}.
\end{equation}
 
 From \eqref{ab1}, we obtain $B$, namely,
 \begin{equation}\label{ab3}
 B=\frac{1}{\tau}\left(A'+\cos\alpha\cos\beta\right).
 \end{equation}
 We now replace the expressions for $A$ and $B$   into  \eqref{ab2}, obtaining  
 $$\frac{d}{ds}\left(\frac{1}{\tau}\left(A'+\cos\alpha\cos\beta\right)\right)+\tau A+\cos\gamma\cos\alpha=0.$$
 Equivalently, 
 \begin{equation}\label{ab33}
 -\frac{d}{ds}\left(\frac{1}{\tau} \frac{d}{ds}\left(\frac{\sin^2\alpha}{\kappa}\right)\right)+\frac{d}{ds}\left(\frac{\cos\alpha\cos\beta}{\tau} \right)-\tau \frac{\sin^2\alpha}{\kappa}+\cos\gamma\cos\alpha=0.
 \end{equation}
In this equation, all three direction cosines appear.   The next step is to express this equation in terms of only one direction cosine, specifically, of  $\cos\alpha$.

First, using the fact that $\bfa$ is a unit vector, we can write 
\begin{equation}\label{cog}
\cos\gamma = \pm\sqrt{1-\cos^2\alpha-\cos^2\beta}.
\end{equation}
 The local choice of the sign depends on the orientation of the curve, but this ambiguity will not affect our subsequent computations since the equation will be eventually squared.

\begin{remark}
Let us observe that  combining     equations   \eqref{ab1} and \eqref{ab2} does not yield any further information. Indeed, from both equations, we obtain 
$$-\cos\alpha(A\cos\beta+B\cos\gamma)=AA'+BB'=\frac12(A^2+B^2)'=\frac12(\rho^2)'=0,$$
because $\rho$ is constant. However, the left-hand side of this identity is trivially $0$ by the definitions of $A$ and $B$ in \eqref{eq2} together with \eqref{ss}.
\end{remark}

From \eqref{eq2}, we have $\rho^2=A^2+B^2$ and using the values of $A$ and $B$ in \eqref{eq4} and \eqref{ab3}, respectively, we obtain 
$$\rho^2=A^2+B^2=\left( \frac{\sin^2\alpha}{\kappa}\right)^2+\frac{1}{\tau^2}\left(\frac{d}{ds} \left(\frac{\sin^2\alpha}{\kappa}\right)-
\cos\alpha\cos\beta\right)^2.$$
From the first equation of \eqref{ss}, we have 
\begin{equation}\label{cob}
\cos\beta=\frac{(\cos\alpha)'}{\kappa}.
\end{equation}
 Thus, the above identity becomes
\begin{equation}\label{r7}
\rho^2=  \frac{\sin^4\alpha}{\kappa^2} +\frac{1}{\tau^2}\left[\frac{d}{ds} \left(\frac{\sin^2\alpha}{\kappa}\right)+\frac{\alpha'\sin(2\alpha)}{2\kappa}  \right]^2.
\end{equation}

We introduce the main variable that will govern the rest of our study.

\begin{definition}
 We define the \emph{fundamental function} $\psi$ as
\begin{equation}\label{def}
\psi = \sin^2\alpha.
\end{equation}
\end{definition}

Notice that $\psi$ cannot be identically  $0$ because in such a case, $\langle\bft,\bfa\rangle^2=1$ and $\bfr$ is a straight line.  Analogously, if $\psi$ is identically $1$, then  $\langle\bft,\bfa\rangle=0$ for all $\in I$. Then $\bfr$ is a planar curve, and $\tau$ would be $0$, which was initially discarded.

Using the function $\psi$, equation \eqref{r7} becomes 
\begin{equation}\label{eq5}
\rho^2=  \frac{\psi^2}{\kappa^2} +\frac{1}{\tau^2}\left[  \frac{3\psi'}{2\kappa}-\frac{\psi\kappa'}{\kappa^2}   \right]^2.
\end{equation}
In particular, $\psi^2\leq \rho^2\kappa^2$. This gives
\begin{equation}\label{p1}
\psi'=\frac{2}{3} \left(\frac{\psi  \kappa '}{\kappa }\pm\tau  \sqrt{\rho ^2 \kappa ^2-\psi ^2}\right).
\end{equation}
The presence of the $\pm$ sign arises from taking the square root in \eqref{eq5}. Equation \eqref{p1} (or \eqref{eq5}) is  the ODE $\mathcal{Q}=0$ mentioned in the Introduction.  For the subsequent algebraic manipulation, this choice of sign is not critical, as we will eventually square the expressions to eliminate the radical.

Let
$$P=\sqrt{\rho ^2 \kappa ^2-\psi ^2}.$$
Differentiating \eqref{p1}, and using the expression for $\psi'$ in \eqref{p1} yields
\begin{equation}\label{p2}
\begin{split}
\psi''&=\frac{2 \left(5 \rho ^2 \kappa ^3 \tau  \kappa '-\psi  \kappa '^2 P-\kappa  \psi  \left(4 \tau  \psi  \kappa '-3 \kappa '' P\right)-\kappa ^2 \psi  \left(2 \tau ^2 P+3 \psi  \tau '\right)+3 \rho ^2 \kappa ^4 \tau '\right)}{9 \kappa ^2 P}.
\end{split}
\end{equation}

We write Eq. \eqref{ab33} in terms of the function $\psi$. From \eqref{ss} and the definition of $\psi$ in \eqref{def}, we have 
\begin{equation*}
\begin{split}
\cos\beta&=\frac{(\cos\alpha)'}{\kappa}=-\frac{\psi'}{2\kappa\sqrt{1-\psi}},\\
\cos\gamma&=\pm \sqrt{1-\cos^2\alpha-\cos^2\beta}=\pm\frac{\sqrt{4\kappa^2\psi(1-\psi)-\psi'^2}}{2\kappa\sqrt{1-\psi}}.
\end{split}
\end{equation*}

Then \eqref{ab33} becomes
\begin{equation*}
\begin{split}
0&=-4 \tau  \psi  \kappa '^2+\kappa  \left(5 \tau  \kappa ' \psi '+2\psi  \left( \tau  \kappa ''- \kappa ' \tau '\right)\right)\\
&+\kappa ^2 \left( \pm \tau ^2 \sqrt{4 \kappa ^2 (1-\psi ) \psi -\psi '^2}+3 \tau ' \psi '-3 \tau  \psi ''-2 \tau ^3 \psi \right).
\end{split}
\end{equation*}
In this equation, we substitute the values of $\psi'$ and $\psi''$ given in \eqref{p1} and \eqref{p2}, respectively. Isolating the term with the radical, we obtain
$$
\frac{\psi ^2 \kappa '}{P}+\kappa\tau\psi = \pm \kappa \sqrt{-\frac{2 P \tau  \psi  \kappa '}{\kappa }-\frac{\psi ^2 \kappa '^2}{\kappa ^2}-\kappa ^2 \left(\rho ^2 \tau ^2+9 (\psi -1) \psi \right)+\tau ^2 \psi ^2}.$$
By squaring both sides, the ambiguity of the $\pm$ sign is naturally eliminated. Simplifying, we arrive first at 
\begin{equation*}
0=2 \sqrt{\rho^2\kappa^2-\psi^2} \rho ^2 \kappa  \tau  \psi  \kappa '+\rho ^2 \psi ^2 \kappa '^2+\kappa ^2 \left(\rho ^2 \kappa ^2-\psi ^2\right) \left(\rho ^2 \tau ^2+9 (\psi -1) \psi \right).
\end{equation*}
and, finally,  squaring   to eliminate the radical, we obtain

\begin{equation}\label{eq7}
0=4 \rho^4 \kappa^2 \tau^2 \psi^2 \kappa'^2 (\rho^2 \kappa^2 - \psi^2) - \left[ \rho^2 \psi^2 \kappa'^2 + \kappa^2 (\rho^2 \kappa^2 - \psi^2) (\rho^2 \tau^2 + 9 \psi (\psi - 1)) \right]^2.
\end{equation}
This is a polynomial equation of degree $8$ in $\psi$ which can be written as 
$$0=\mathcal{P}(\psi)=\sum_{n=0}^8 P_n\psi^n,$$
where the coefficients $P_n$   depend only on $\kappa$, $\tau$, and $\kappa'$, without involving higher-order derivatives.  When we put $\psi$ into \eqref{p1}, then it will appear  $\tau'$ and $\kappa''$. This ODE is the desired equation involving only $\kappa$ and $\tau$. Thus, the previous arguments have shown the following result.

 \begin{theorem}\label{t1}
 Let $\mathbf{r}(s)$ be a regular curve parametrized by arc length with curvature $\kappa(s) > 0$ and 
 non-zero torsion $\tau(s)$. If the curve lies on a circular cylinder of radius $\rho$, 
 then $\kappa$ and $\tau$ must satisfy an ordinary differential equation 
 \begin{equation}\label{e1}
 \mathcal{E}=\mathcal{E}(\rho,\kappa,\kappa',\kappa'',\tau,\tau')=0,\end{equation}
 obtained by 
 substituting the real roots $\psi$ of the eighth-degree polynomial equation 
 \eqref{eq7} into the differential equation \eqref{p1}.
 \end{theorem}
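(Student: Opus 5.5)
The plan is to assemble the chain of identities derived above into a single elimination argument, taking care that every step is either an identity or a squaring, so that the conclusion is a genuine necessary condition. Assume $\bfr$ lies on $\cy$ with axis direction $\bfa$ and radius $\rho$. Write $\bfa$ in the Frenet frame as in \eqref{eqa}. The direction cosines then obey \eqref{ss}, and differentiating \eqref{eq0} gives the decomposition \eqref{eq2} with the coefficients $A,B$ of \eqref{eq3}. A second differentiation gives $A=-\psi/\kappa$ as in \eqref{eq4}. Equation \eqref{ab1} then determines $B$, and the identity $A^2+B^2=\rho^2$ gives \eqref{eq5}. Solving \eqref{eq5} for $\psi'$ yields \eqref{p1} for one of the two sign choices; since $\tau\neq0$, the coefficient of the bracket is finite. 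This is the differential relation $\mathcal Q(\psi,\psi')=0$.

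Next I would produce the algebraic relation. Rewrite \eqref{ab33}, which is \eqref{ab2} after substitution, in terms of $\psi$ alone. This uses \eqref{cob} for $\cos\beta$ and the unit-length condition \eqref{cog} for $\cos\gamma$, both expressed through $\psi,\psi'$ and $\kappa$. Then replace $\psi'$ and $\psi''$ by \eqref{p1} and \eqref{p2}. After this, $\psi'$ and $\psi''$ no longer appear, and the only irrationalities left are $\sqrt{\rho^2\kappa^2-\psi^2}$ and the radical coming from $\cos\gamma$. Isolate the $\cos\gamma$ radical and square, then isolate $P$ and square again. The result is \eqref{eq7}, a polynomial identity in $\psi$ of degree at most $8$ whose coefficients involve only $\rho,\kappa,\kappa',\tau$. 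Each squaring preserves truth, although it may add spurious solutions, so the true $\psi(s)$ is a real root of $\mathcal P$ at every $s$.

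To conclude, I would argue as follows. On an open set where $\mathcal P\not\equiv0$ and its roots are simple, the implicit function theorem gives smooth local root branches $\psi_j(s)$, and the actual $\psi$ agrees with one of them on an open subinterval. Substituting that branch into \eqref{p1} and moving everything to one side gives $\mathcal E=0$. Differentiating the branch via $\psi_j'=-\partial_s\mathcal P/\partial_\psi\mathcal P$ introduces $\kappa''$ and $\tau'$. To get a single sign-free equation, one can multiply over both signs in \eqref{p1}, forming $(\tfrac32\psi'-\psi\kappa'/\kappa)^2-\tau^2(\rho^2\kappa^2-\psi^2)$, which is just \eqref{eq5}. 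Alternatively, one can take the resultant in $\psi$ of $\mathcal P$ and of $\mathcal P'$ combined with \eqref{eq5}. Either way, $\mathcal E$ depends only on $\rho,\kappa,\kappa',\kappa'',\tau,\tau'$.

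The main obstacle is the elimination step: checking that the substitutions of \eqref{p1} and \eqref{p2} into \eqref{ab33} really collapse to the stated radical identity, and that $\mathcal P$ is not identically zero. For non-degeneracy I would examine the leading coefficient. The $\psi^8$ term of \eqref{eq7} comes from $-[\kappa^2(-\psi^2)(9\psi^2)]^2=-81\kappa^4\psi^8$, which is nonzero because $\kappa>0$. So $\mathcal P$ has degree exactly $8$, and its root set is finite at each $s$.
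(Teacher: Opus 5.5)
Your proposal is correct and follows the paper's own derivation step for step: $A=-\psi/\kappa$, $B$ from \eqref{ab1}, $A^2+B^2=\rho^2$ giving \eqref{eq5}/\eqref{p1}, rewriting \eqref{ab33} in terms of $\psi$ with \eqref{p1}--\eqref{p2}, two squarings to reach \eqref{eq7}, and substituting a root into \eqref{p1}. Your additions (the $\psi^8$ coefficient $-81\kappa^4\neq 0$, so $\mathcal P\not\equiv 0$, and the root-branch/resultant formulation of the elimination) are sound refinements that the paper leaves implicit, with one caveat: when $\kappa'\equiv 0$ on an interval, \eqref{eq7} is minus a perfect square, so every root is at least double and your simple-root implicit-function step does not apply there; in that case you must pass to the square-free factor, exactly as the paper does in the proof of Theorem~\ref{t2}.
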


 \begin{remark}
It is worth noting that the final ODE $\mathcal{E} = 0$ is obtained by eliminating $\psi$ between equations $\mathcal{P} = 0$ and $\mathcal{Q} = 0$. Conceptually, if $\{\psi_1, \dots, \psi_8\}$ are the roots of the polynomial equation, the condition $\mathcal{E} = 0$ can be understood as the vanishing of the product:
\begin{equation*}
\prod_{i=1}^{8} \mathcal{Q}(\psi_i, \psi_i') = 0.
\end{equation*}
This ensures that the resulting ODE is a single, well-defined expression that is independent of the specific root chosen, reflecting the symmetric nature of the elimination process.
\end{remark}

From an applied perspective, such as in 3D shape recognition or spatial curve tracking, reducing the geometric problem to the polynomial equation \eqref{eq7} is theoretically highly advantageous. For a given point on 
a trajectory, the coefficients evaluate to real scalars, transforming the characterization into a one-dimensional root-finding task. However, it is important to acknowledge the numerical challenges, especially in extracting   the derivatives of $\kappa$ and $\tau$ in order to manage and numerically solve the ODE $\mathcal{E}=0$.

\begin{corollary} Let $\bfr$ be a curve with curvature $\kappa>0$ and torsion $\tau\not=0$. If $\kappa$ and $\tau$ satisfy the differential equation \eqref{p1}, then the curve $\tilde{\bfr}$ whose curvature is $\kappa$ and torsion $-\tau$ also satisfies \eqref{p1}.
\end{corollary}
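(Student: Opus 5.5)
The plan is to exploit the sign ambiguity already present in \eqref{p1}. Two facts are relevant. In \eqref{p1} the torsion enters only through the term $\pm\tau\sqrt{\rho^2\kappa^2-\psi^2}$. The polynomial equation \eqref{eq7} involves $\tau$ only through $\tau^2$. Neither $\kappa$ nor $\kappa'$, nor the quantity $P=\sqrt{\rho^2\kappa^2-\psi^2}$, depends on the sign of $\tau$.

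First, fix a function $\psi$ such that $\kappa,\tau,\psi$ satisfy \eqref{p1} with one choice of sign, say
\[
\psi'=\frac{2}{3}\left(\frac{\psi\kappa'}{\kappa}+\epsilon\,\tau\sqrt{\rho^2\kappa^2-\psi^2}\right),\qquad \epsilon\in\{1,-1\}.
\]
Rewrite the right-hand side as $\frac{2}{3}\bigl(\frac{\psi\kappa'}{\kappa}+(-\epsilon)(-\tau)\sqrt{\rho^2\kappa^2-\psi^2}\bigr)$. Then the same $\psi$, together with the curvature $\kappa$ and the torsion $\tilde\tau=-\tau$, satisfies \eqref{p1} with the opposite sign $-\epsilon$. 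Since \eqref{p1} is stated with $\pm$, this is admissible. By the fundamental theorem of curves there is a curve $\tilde{\bfr}$ with curvature $\kappa$ and torsion $-\tau$ (for instance, the mirror image of $\bfr$). Consequently, the data of $\tilde{\bfr}$ satisfy \eqref{p1}.

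Second, for completeness, check that this is compatible with the full elimination in Theorem \ref{t1}. The equivalent squared form \eqref{eq5} contains $\tau$ only as $\tau^2$. Likewise \eqref{eq7} contains only $\tau^2$, through the terms $\rho^2\tau^2$ and $4\rho^4\kappa^2\tau^2\psi^2\kappa'^2$. Hence the same root $\psi$ of $\mathcal{P}$ works for $\tilde{\bfr}$. Differentiating, $\psi''$ in \eqref{p2} changes consistently, because $\tau$ and $\tau'$ flip sign together with $\epsilon$, and every $\tau$-dependent term in \eqref{p2} carries one factor of $\epsilon$ relative to $P$.

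The only real point requiring care is this sign bookkeeping. We must make sure that the $\pm$ in \eqref{p1} is read as a free choice, rather than one tied to the orientation of $\bfr$, so that the flip $\epsilon\mapsto-\epsilon$ is legitimate.

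Geometrically, the result is expected: reflecting $\bfr$ in a plane containing the axis $\bfa$ gives a curve on the same cylinder $\cy$ with the same $\kappa$ and with $\tau$ replaced by $-\tau$.
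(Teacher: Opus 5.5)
Your proposal is correct and follows essentially the same route as the paper: you absorb $\tau\mapsto-\tau$ into the $\pm$ of \eqref{p1} (equivalently, you note that \eqref{eq5} involves only $\tau^2$), and you observe that \eqref{eq7} depends on $\tau$ only through $\tau^2$, so the same $\psi$ works for both curves. Your additional check on \eqref{p2} is not needed but does no harm. Your observation that the reflection should be taken in a plane containing the axis is slightly more precise than the paper's closing remark.
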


\begin{proof}
First, observe that the polynomial equation \eqref{eq7}, which restricts the values of $\psi$, depends on the torsion only through its square, $\tau^2$. Therefore, substituting $-\tau$ for $\tau$ leaves the equation invariant, meaning that both curves share the same valid functions for $\psi$.

Second, replacing $\tau$ with $-\tau$ in the differential equation \eqref{p1} simply changes the $\pm$ sign to $\mp$. Since the equation inherently accounts for both signs, the relation holds for $-\tau$ whenever it holds for $\tau$. 
\end{proof}
 
Geometrically, the curve $\tilde{\bfr}$ corresponds to a reflection of $\bfr$ across a plane, a rigid transformation that naturally preserves its inclusion in a circular cylinder of the same radius.

It is worth noting that Theorem \ref{t1} provides a necessary condition for a curve to lie on a circular cylinder. The converse problem, establishing sufficiency, is significantly more delicate. Since our constructive proof relies on differentiating the geometric constraints and squaring equations to eliminate radicals, the reverse process would inherently introduce arbitrary constants of integration.

It is worth making two further observations.

\begin{remark}
 In contrast to previous approaches, our method is strictly algebraic 
and geometrically direct because we focus on the  function $\psi = \sin^2\alpha$, 
which represents the inclination of the tangent vector relative to the cylinder's fixed axis. 
This allows us to explicitly eliminate the direction cosines 
without invoking secondary curves. 
 
\end{remark}

\begin{remark}
The condition \eqref{es} for a regular curve to be spherical can be recovered as a limiting case of Theorem \ref{t1}. Specifically, if a curve is contained in a sphere of radius $\rho$, the cylinder axis $\mathbf{a}$ is no longer uniquely defined, and we may consider that $\bfa$ reduces to a point. In particular, in the limiting case we have $\alpha = \pi/2$. Consequently, $\psi \equiv 1$ identically. Substituting this into equation \eqref{eq7}, the expression reduces to:
$$\frac{\kappa'^2}{\kappa^2 \tau^2} = \rho^2 \kappa^2 - 1,$$
which coincides with the classical spherical condition \eqref{es}. Alternatively, applying $\psi \equiv 1$ (and thus $\psi' = 0$) to \eqref{eq5}, we obtain:
$$\rho^2 = \frac{1}{\kappa^2} + \frac{1}{\tau^2} \frac{\kappa'^2}{\kappa^4},$$
which again recovers \eqref{es}.
\end{remark}

\section{Special classes of cylindrical curves}\label{s3}

In this section, we apply the general characterization established in Theorem \ref{t1} to specific families of spatial curves. We first verify that our results consistently recover the classical geometry of circular helices. Subsequently, we explore the more complex cases of generalized helices and curves with constant curvature.

\subsection{Circular helices}
The most elementary example of a cylindrical curve is the circular helix.  A circular helix is characterized by the fact that both $\kappa$ and $\tau$ are constant. The following result is known, but we recover it from Theorem \ref{t1}.

\begin{proposition}\label{prop3.1}
Let $\mathbf{r}$ be a curve with constant curvature $\kappa_0 > 0$ and constant torsion $\tau_0\not=0$. If $\mathbf{r}$ lies on a circular cylinder $\mathcal{C}$ of radius $\rho$, then   
\begin{equation}\label{pk}
\rho = \frac{\kappa_0}{\kappa_0^2 + \tau_0^2}.
\end{equation}
\end{proposition}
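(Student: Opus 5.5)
With $\kappa=\kappa_0$ and $\tau=\tau_0$ constant, every derivative of $\kappa$ and $\tau$ vanishes, so \eqref{eq7} and \eqref{p1} become much simpler. We first find the admissible values of $\psi$ from the polynomial condition, then force them to be compatible with the differential condition.

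\emph{Step 1: the polynomial condition.} Putting $\kappa'=0$ in \eqref{eq7}, the first term disappears and we are left with
$$\kappa_0^4(\rho^2\kappa_0^2-\psi^2)^2\bigl(\rho^2\tau_0^2+9\psi(\psi-1)\bigr)^2=0.$$
So at each point either $\psi^2=\rho^2\kappa_0^2$ or $9\psi^2-9\psi+\rho^2\tau_0^2=0$. In both cases $\psi(s)$ takes values in a finite set of constants. Since $\psi$ is continuous, it is constant on $I$, and hence $\psi'=0$.

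\emph{Step 2: the differential condition.} With $\kappa'=0$ and $\psi'=0$, equation \eqref{p1} (equivalently \eqref{eq5}) gives $\tau_0\sqrt{\rho^2\kappa_0^2-\psi^2}=0$. Because $\tau_0\neq0$, this forces $\psi^2=\rho^2\kappa_0^2$, so $\psi=\rho\kappa_0$ (note $\psi\ge0$). The root of the quadratic factor is not forced by the first branch, so we keep only $\psi=\rho\kappa_0$.

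\emph{Step 3: determining $\psi$ independently.} One relation between $\psi$ and $\rho$ is not enough to fix $\rho$, so we compute $\psi$ directly from the constant frame equations \eqref{ss}. Since $\psi=\sin^2\alpha$ is constant, $\cos\alpha$ is constant, and the first equation of \eqref{ss} gives $\cos\beta=0$. The second equation then gives $\kappa_0\cos\alpha=\tau_0\cos\gamma$. Combining this with $\cos^2\alpha+\cos^2\gamma=1$ yields
$$\cos^2\alpha=\frac{\tau_0^2}{\kappa_0^2+\tau_0^2},\qquad \psi=\sin^2\alpha=\frac{\kappa_0^2}{\kappa_0^2+\tau_0^2}.$$
Equating with $\psi=\rho\kappa_0$ gives $\rho=\kappa_0/(\kappa_0^2+\tau_0^2)$, which is \eqref{pk}.

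The main obstacle is exactly this last point. The compatibility of \eqref{eq7} with \eqref{p1} alone only yields $\psi=\rho\kappa_0$, so the argument must go back to the direction-cosine system \eqref{ss} (or to \eqref{ab33}) to pin down $\psi$. One consistency check is worth doing: with $\psi'=0$ and $\kappa'=0$ we get $A=-\psi/\kappa_0$ constant and $B=0$ from \eqref{ab3}. Then \eqref{ab2} gives $\tau_0 A=-\cos\gamma\cos\alpha$. This relation is consistent with the values of $\psi$ and $\rho$ found above, which confirms the result.
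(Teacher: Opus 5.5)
Your proposal is correct and follows essentially the same route as the paper. Both arguments take $\psi$ as a constant root of \eqref{eq7}, use \eqref{eq5} to force $\psi=\rho\kappa_0$, and then obtain $\psi=\kappa_0^2/(\kappa_0^2+\tau_0^2)$ from the direction cosines with $\cos\beta=0$. The only difference is minor: you read off $\kappa_0\cos\alpha=\tau_0\cos\gamma$ from the second equation of \eqref{ss}, whereas the paper substitutes into \eqref{ab33}, which is exactly your closing consistency check; your continuity argument also makes explicit the paper's claim that $\psi$ is a constant root.
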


\begin{proof}
If $\kappa_0$ and $\tau_0$ are constants, then all coefficients in the polynomial equation \eqref{eq7} are constant; hence, the function $\psi$ must be a constant root. From    \eqref{eq5}, we have $\psi = \rho\kappa_0$. Identities \eqref{ss} imply that $\cos\beta=0$ and $\cos\gamma$ is constant. Substituting these  into   \eqref{ab33}, we obtain
 $$\psi = \frac{\kappa_0^2}{\kappa_0^2 + \tau_0^2}.$$
  Finally, using the relation $\rho = \psi/\kappa_0$, we get \eqref{pk}. 
\end{proof}

\subsection{Generalized helices (Lancret curves)}
A natural generalization of the circular helix is the Lancret curve (or generalized helix), characterized by the fact that the ratio $\tau/\kappa = a$ is constant. For such curves, our Theorem \ref{t1} provides a specific necessary condition for cylindrical inclusion. By substituting $\tau = a\kappa$ into \eqref{p1} and \eqref{eq7}, these  necessary conditions become  
\begin{equation}\label{l1}
\psi'=\frac{2}{3} \left(\frac{\psi  \kappa '}{\kappa }\pm a\kappa \sqrt{\rho ^2 \kappa ^2-\psi ^2}\right),
\end{equation}
and
\begin{equation}\label{l2}
\pm 2 a \rho^2 \kappa^2 \psi \kappa'\sqrt{\rho^2 \kappa^2 - \psi^2} =  \rho^2 \psi^2 \kappa'^2 + \kappa^2 (\rho^2 \kappa^2 - \psi^2) (a^2 \rho^2 \kappa^2 + 9 \psi (\psi - 1)).
\end{equation}
Both equations  impose   strict constraints on the       curvature $\kappa(s)$ for any Lancret curve   that lies on a circular cylinder.

This is in great contrast to the case of a Lancret curve lying on a sphere, where the ODE \eqref{es} can be explicitly integrated in terms of elementary functions   \cite{gray}.

\subsection{Curves with constant curvature}
A more intriguing question arises when the curvature is constant ($\kappa = \kappa_0$) but the torsion $\tau(s)$ is allowed to vary. Unlike the previous case, such a curve is not necessarily a circular helix. The following result establishes a remarkable necessary condition expressed only in terms of the torsion of the curve, just  as required in the initial problem. In other words, we are able to make the differential equation \eqref{e1} of Theorem \ref{t1} explicit.

\begin{theorem}\label{t2}
Let $\mathbf{r}(s)$ be a regular curve constrained to a cylinder of radius $\rho$. If the curvature is a non-zero constant $\kappa(s) = \kappa_0$, then one of the following holds: 
\begin{enumerate}
\item $\bfr$ is a circular helix.
\item The torsion is not constant and it  must satisfy the following first-order   differential equation:
\begin{equation}\label{eqs}
 \frac{\rho^4 \tau'^2}{9 - 4\rho^2\tau^2} = \rho^2\kappa_0^2 - \frac{1}{2} + \frac{\rho^2\tau^2}{9} \mp \frac{1}{6}\sqrt{9 - 4\rho^2\tau^2}. \end{equation}
In particular, it is necessary that $|\tau|\leq \frac{3}{2\rho}$.
\end{enumerate}
\end{theorem}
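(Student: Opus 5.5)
The plan is to specialize the elimination of Section~\ref{sec2} to $\kappa'\equiv 0$, $\kappa\equiv\kappa_0$. The degree-$8$ polynomial \eqref{eq7} should then collapse, and $\psi$ can be solved for explicitly in terms of $\tau$. I will not work with \eqref{eq7} itself but with the equation just before the last squaring, namely
\[
0=2 \sqrt{\rho^2\kappa^2-\psi^2}\, \rho ^2 \kappa  \tau  \psi  \kappa '+\rho ^2 \psi ^2 \kappa '^2+\kappa ^2 \left(\rho ^2 \kappa ^2-\psi ^2\right) \left(\rho ^2 \tau ^2+9 (\psi -1) \psi \right).
\]
With $\kappa'=0$ this reduces to $\kappa_0^2(\rho^2\kappa_0^2-\psi^2)(\rho^2\tau^2+9\psi^2-9\psi)=0$. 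Hence on each subinterval (working on open sets where one factor vanishes identically, and using continuity) either $\psi^2=\rho^2\kappa_0^2$ or $9\psi^2-9\psi+\rho^2\tau^2=0$.

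\emph{First branch.} Here $\psi=\rho\kappa_0$ is constant, so $\psi'=0$ and $\cos\alpha$ is constant. By \eqref{ss}, $\cos\beta=0$ and then $\cos\gamma$ is constant. Substituting into \eqref{ab33}, exactly as in the proof of Proposition~\ref{prop3.1}, gives $\tau\,\rho\kappa_0\,(\tau/\kappa_0)=\cos\gamma\cos\alpha$, up to constants. This forces $\tau$ to be constant, so $\bfr$ is a circular helix, which is alternative (1). I expect the main obstacle to be handling this branch carefully. It is the degenerate one where $P=0$, so formula \eqref{p2}, which divides by $P$, cannot be used. I would therefore avoid \eqref{p2} and argue directly from \eqref{eq5} and \eqref{ab33}. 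One must also make sure that $\cos\alpha\neq0$, which holds because $\tau\neq0$ excludes $\psi\equiv1$.

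\emph{Second branch.} Solving the quadratic gives
\[
\psi=\frac{3\pm\sqrt{9-4\rho^2\tau^2}}{6}.
\]
Since $\psi$ is real, $9-4\rho^2\tau^2\ge 0$, that is, $|\tau|\le 3/(2\rho)$. If $\tau$ is constant we are again in the helix case. Otherwise I differentiate to get
\[
\psi'=\mp\frac{2\rho^2\tau\tau'}{3\sqrt{9-4\rho^2\tau^2}}.
\]
Then I insert this into the ODE \eqref{p1} with $\kappa'=0$, squared so that the sign is irrelevant: $\psi'^2=\tfrac49\tau^2(\rho^2\kappa_0^2-\psi^2)$. Dividing by $\tfrac49\tau^2\neq0$ gives
\[
\frac{\rho^4\tau'^2}{9-4\rho^2\tau^2}=\rho^2\kappa_0^2-\psi^2 .
\]
Finally I compute
\[
\psi^2=\frac{18-4\rho^2\tau^2\pm6\sqrt{9-4\rho^2\tau^2}}{36}=\frac12-\frac{\rho^2\tau^2}{9}\pm\frac16\sqrt{9-4\rho^2\tau^2},
\]
and substituting this yields exactly \eqref{eqs}, with the sign $\mp$.
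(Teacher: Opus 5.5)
Your proposal is correct and follows essentially the same route as the paper. With $\kappa'=0$ the eliminant factors as $(\rho^2\kappa_0^2-\psi^2)(\rho^2\tau^2+9\psi(\psi-1))=0$; the branch $\psi=\rho\kappa_0$ gives constant $\tau$ through \eqref{ab33}, and the other branch gives \eqref{eqs} once the quadratic is solved for $\psi$ and inserted into \eqref{p1}.

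One small slip: in the first branch \eqref{ab33} reduces to $\tau\psi/\kappa_0=\cos\gamma\cos\alpha$, i.e. $\tau\rho=\cos\gamma\cos\alpha$. This is linear in $\tau$, so your displayed $\tau\rho\kappa_0(\tau/\kappa_0)$ is garbled, but the conclusion that $\tau$ is constant is unaffected.
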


\begin{proof}
 Assuming $\kappa(s) = \kappa_0$, the polynomial equation \eqref{eq7} simplifies   to
\[
(\rho^2 \kappa_0^2 - \psi^2) (\rho^2 \tau^2 + 9 \psi (\psi - 1)) = 0.
\]
If the first factor vanishes, then $\psi = \rho \kappa_0$. This proves that $\psi$ is constant. \textcolor{blue}{By the definition of $\psi$, we have that $\alpha$ is constant. From \eqref{cog} and \eqref{cob}, we deduce $\cos\beta=0$ and $\cos\gamma=\pm\sin\alpha$. Finally, equation \eqref{ab33} becomes $\tau\psi\pm \kappa_0\sin\alpha\cos\alpha=0$. This proves that $\tau$ is constant because $\psi\not=0$.}

If the second factor vanishes, then 
$$\tau^2 = \frac{9}{\rho^2}\psi(1-\psi).$$
This gives 
\begin{equation}\label{pp1}
 \psi = \frac{1}{2} \pm \frac{1}{6}\sqrt{9 - 4\rho^2\tau^2} 
 \end{equation}
and, consequently, 
\begin{equation}\label{pp2}
 \psi' = \mp \frac{2\rho^2\tau\tau'}{3\sqrt{9 - 4\rho^2\tau^2}}. 
 \end{equation}
On the other hand, since $\kappa$ is constant, equation \eqref{p1} is 
$$ \psi' = \pm \frac{2}{3}\tau \sqrt{\rho^2\kappa_0^2 - \psi^2}. $$
Equating this expression with \eqref{pp2},  and assuming $\tau \neq 0$ (thus excluding planar curves), we cancel the common factor $\frac{2}{3}\tau$, to obtain
$$ \mp \frac{\rho^2\tau'}{\sqrt{9 - 4\rho^2\tau^2}} = \pm \sqrt{\rho^2\kappa_0^2 - \psi^2}. $$
Squaring both sides, and using the expression for $\psi$ in \eqref{pp1}, we get \eqref{eqs}. 
\end{proof}
 
 \begin{corollary}
Let $\tau(s)$ be the torsion of a curve lying on a cylinder of constant radius $\rho$, and let $\kappa_0$ be a non-zero constant. Then    the general solution $\tau(s)$  of \eqref{eqs}   is   given by the elliptic integral equation
\begin{equation}\label{eq26}
    \pm \frac{s - s_0}{3\rho} = \int_{u_0}^{\sqrt{9 - 4\rho^2\tau(s)^2}} \frac{dx}{\sqrt{(9 - x^2)\left(36\rho^2\kappa_0^2 - 9 \mp 6x - x^2\right)}},
\end{equation}
where $s_0$ is an initial arc length parameter and $u_0 = \sqrt{9 - 4\rho^2\tau(s_0)^2}$.
\end{corollary}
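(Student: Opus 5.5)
The plan is to reduce \eqref{eqs} to a separable first-order equation through the substitution suggested by the radical that already appears there. Set
\[
x(s)=\sqrt{9-4\rho^2\tau(s)^2},
\]
which is well defined by the bound $|\tau|\le \frac{3}{2\rho}$ from Theorem \ref{t2}. Then $\tau^2=(9-x^2)/(4\rho^2)$. Differentiating $x^2=9-4\rho^2\tau^2$ gives $x x'=-4\rho^2\tau\tau'$. Wherever $\tau\neq 0$ (guaranteed by the standing hypothesis of non-zero torsion), this yields
\[
\tau'^2=\frac{x^2x'^2}{16\rho^4\tau^2}=\frac{x^2x'^2}{4\rho^2(9-x^2)}.
\]

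Next I rewrite both sides of \eqref{eqs} in terms of $x$. The left-hand side is $\rho^4\tau'^2/x^2$, which becomes $\rho^2x'^2/\bigl(4(9-x^2)\bigr)$. The right-hand side becomes
\[
\rho^2\kappa_0^2-\tfrac12+\tfrac{9-x^2}{36}\mp\tfrac{x}{6}=\frac{36\rho^2\kappa_0^2-9\mp 6x-x^2}{36}.
\]
Equating the two expressions gives the autonomous equation
\[
x'^2=\frac{(9-x^2)\left(36\rho^2\kappa_0^2-9\mp 6x-x^2\right)}{9\rho^2}.
\]
Taking square roots, $x'=\pm\frac{1}{3\rho}\sqrt{(9-x^2)(36\rho^2\kappa_0^2-9\mp6x-x^2)}$. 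Separating variables and integrating from $s_0$ to $s$, with $x(s_0)=u_0$, gives exactly \eqref{eq26}. Since the radicand is a quartic polynomial in $x$, the integral is an elliptic integral.

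The main obstacle is justifying the division steps and the separation of variables, rather than the algebra. One needs $\tau\neq0$ and $x\neq 0$ when passing from $\tau'$ to $x'$, and the quartic radicand must be nonzero on the integration interval for the separation to hold. I would therefore argue on a maximal subinterval where $x'\neq 0$, where the radicand is positive and $x$ is strictly monotone. At isolated turning points the sign $\pm$ may switch, and the formula must then be read with the appropriate branch; constant solutions corresponding to zeros of the quartic give the degenerate case, which is excluded by case (2) of Theorem \ref{t2} (non-constant torsion). With these provisos the identity \eqref{eq26} holds locally and describes the general solution.
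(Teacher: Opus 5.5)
Your proposal is correct and is essentially the paper's proof: you use the same substitution $x=\sqrt{9-4\rho^2\tau^2}$, reduce to $x'^2=\frac{1}{9\rho^2}(9-x^2)(36\rho^2\kappa_0^2-9\mp 6x-x^2)$, and separate variables. Your extra remarks on when dividing by $\tau$ and $x$ is legitimate, and on working on intervals where $x'\neq 0$, add care that the paper leaves implicit.
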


\begin{proof}
Introduce the change of variable $u(s) = \sqrt{9 - 4\rho^2\tau(s)^2}$, which implies $\rho^2\tau^2 = \frac{1}{4}(9 - u^2)$. Then  \eqref{eqs} becomes 
\begin{equation*}
    \frac{\rho^2 u'^2}{4(9 - u^2)} = \rho^2\kappa_0^2 - \frac{1}{2} + \frac{9 - u^2}{36} \mp \frac{u}{6}.
\end{equation*}
Simplifying the right-hand side and isolating $u'^2$, we obtain the following first-order   differential equation
\begin{equation*}
    u'^2 = \frac{9 - u^2}{9\rho^2} \left( 36\rho^2\kappa_0^2 - 9 \mp 6u - u^2 \right).
\end{equation*}
Taking the square root and separating the variables $u$ and $s$, we arrive at
\begin{equation*}
    \frac{du}{\sqrt{(9 - u^2)\left( 36\rho^2\kappa_0^2 - 9 \mp 6u - u^2 \right)}} = \pm \frac{ds}{3\rho}.
\end{equation*}
This proves the result.
\end{proof}
 Although \eqref{eq26} is difficult to solve in full generality, a particular case is of interest, namely, $\rho\kappa_0=1$.

\begin{corollary}\label{cor34}
Under the   condition $\kappa_0 = 1/\rho$, the   differential equation \eqref{eqs} for the torsion $\tau(s)$ admits an  explicit,   exact solution. Specifically, let   $Y(s) = C e^{\pm \frac{2\sqrt{2}}{\rho} s}$, where $C > 0$ is a constant of integration depending on the initial conditions. Then the torsion of the curve is given by
\begin{equation*}
    \tau(s) = \pm \frac{6\sqrt{2}}{\rho} \frac{\sqrt{Y(s)}\big|Y(s) - 1\big|}{Y(s)^2 + 6Y(s) + 1}.
\end{equation*}
\end{corollary}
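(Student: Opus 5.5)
The plan is to specialize the elliptic integral \eqref{eq26} to $\rho\kappa_0=1$ and show that the quartic under the radical acquires a double root. The integral then degenerates to an elementary one, which we integrate and invert explicitly.

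First, following the proof of the preceding corollary, set $u=\sqrt{9-4\rho^2\tau^2}\in[0,3]$, so that $\rho^2\tau^2=\tfrac14(9-u^2)$. The ODE becomes
\[
u'^2=\frac{9-u^2}{9\rho^2}\left(27\mp 6u-u^2\right).
\]
The key algebraic observation is that $27-6u-u^2=(3-u)(9+u)$ and $27+6u-u^2=(3+u)(9-u)$. Since $9-u^2=(3-u)(3+u)$, the two sign choices give respectively
\[
u'^2=\frac{(3-u)^2(3+u)(9+u)}{9\rho^2}
\qquad\text{or}\qquad
u'^2=\frac{(3+u)^2(3-u)(9-u)}{9\rho^2}.
\]
These two cases are interchanged by $u\mapsto -u$, so it suffices to treat the first and transfer the result. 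Taking square roots gives
\[
\frac{du}{(3-u)\sqrt{(3+u)(9+u)}}=\pm\frac{ds}{3\rho}.
\]
Away from the equilibrium $u=3$ (which corresponds to $\tau=0$ and is excluded), this is an elementary separable equation.

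To integrate it, I would rationalize with the Euler-type substitution $t=\sqrt{(9+u)/(3+u)}$. This gives $u=(9-3t^2)/(t^2-1)$, and as $u$ ranges over $[0,3]$ the variable $t$ lies in $[\sqrt2,\sqrt3]$. A direct computation should turn the left-hand side into a rational differential of the form $c\,dt/(t^2-2)$; the factor $3-u$ produces exactly $t^2-2$ up to a multiple of $t^2-1$, which cancels against the Jacobian. Integrating this gives a logarithm of $(t-\sqrt2)/(t+\sqrt2)$, so
\[
\frac{t-\sqrt2}{t+\sqrt2}=C\,e^{\pm\frac{2\sqrt2}{\rho}s},
\]
possibly after adjusting the constant (and possibly with a square root of the exponential, which is where $\sqrt{Y}$ enters). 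This defines $Y(s)$, and the exponent $2\sqrt2/\rho$ should come out of this step.

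Next, I would express $u$ rationally in $Y$ and substitute into $\rho^2\tau^2=\tfrac14(9-u^2)$. The target is the identity
\[
\rho^2\tau^2=\frac{72\,Y(Y-1)^2}{(Y^2+6Y+1)^2},
\]
whose square root, with the sign of $\tau$ restored, is the stated formula; the absolute value $|Y-1|$ arises from taking this square root. As a safeguard, I would also verify the claim directly, in the reverse direction. Starting from the proposed $\tau$, with $Y'=\pm\frac{2\sqrt2}{\rho}Y$, compute $9-4\rho^2\tau^2$. It should be a perfect square, namely $9\big((Y^2-10Y+1)/(Y^2+6Y+1)\big)^2$ or similar. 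Then check \eqref{eqs} with $\rho\kappa_0=1$ as a rational identity in $Y$.

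The main obstacle is the bookkeeping in the integration and inversion: choosing the substitution so that the constants $2\sqrt2$ and $Y^2+6Y+1$ emerge cleanly, and tracking the $\mp$ branch of $\sqrt{9-4\rho^2\tau^2}$ together with the sign of $Y-1$. For this reason I would rely on the direct verification (showing $9-4\rho^2\tau^2$ is a perfect square in $Y$) as the decisive check. The freedom in $C>0$ and the two signs then accounts for the general solution away from the equilibrium.
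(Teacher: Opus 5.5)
Your proposal is correct. Its skeleton matches the paper's: with $\rho\kappa_0=1$ the quartic in \eqref{eq26} acquires the double root $u=3$, the integral becomes elementary, and one inverts and substitutes into $\rho^2\tau^2=\tfrac14(9-u^2)$. You differ from the paper in how you integrate and invert.

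The paper substitutes $v=1/(3-x)$ and uses the standard logarithmic antiderivative of $1/\sqrt{72v^2-18v+1}$. It then has to isolate the radical and square, which gives a quadratic in $x$ with discriminant $(48Y)^2$ and a spurious root $x=3$ to discard.

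Your Euler substitution rationalizes everything at once. One has $3-u=6(t^2-2)/(t^2-1)$, $\sqrt{(3+u)(9+u)}=6t/(t^2-1)$ and $du=-12t\,dt/(t^2-1)^2$, so the left side is exactly $-dt/\bigl(3(t^2-2)\bigr)$. This yields $W:=(t-\sqrt2)/(t+\sqrt2)=Ce^{\pm 2\sqrt2 s/\rho}$ with no square root of the exponential. The $\sqrt{Y}$ only appears when you take the square root of $72Y(Y-1)^2/(Y^2+6Y+1)^2$. Inverting the M\"obius map gives $u=3(W^2-10W+1)/(W^2+6W+1)$ directly, with no squaring and no extraneous root.

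Your $W$ is the reciprocal of the paper's $Y$, which works out to $(t+\sqrt2)/(t-\sqrt2)$. This is harmless: the final formula is invariant under $Y\mapsto 1/Y$, and the family $Ce^{\pm2\sqrt2 s/\rho}$ is closed under reciprocals.

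Your version also makes explicit two points the paper leaves implicit. First, the reflection $u\mapsto-u$ covers the lower-sign branch of \eqref{eqs}, whereas the paper only treats $\int dx/\bigl((3-x)\sqrt{x^2+12x+27}\bigr)$. Second, your direct rational check certifies the formula where $Y^2-10Y+1<0$. There $3(Y^2-10Y+1)/(Y^2+6Y+1)$ is negative and so is not $\sqrt{9-4\rho^2\tau^2}$, and the same $\tau$ solves \eqref{eqs} with the opposite sign choice. The paper's derivation on $x\in[0,3]$ does not reach this range.
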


\begin{proof}
Imposing the condition $\kappa_0 = 1/\rho$,   the integral in the right-hand side of \eqref{eq26} reduces to
\begin{equation*}
    I = \int \frac{dx}{(3-x)\sqrt{x^2 + 12x + 27}},
\end{equation*}
where $x=\sqrt{9-4\rho^2\tau^2}$ is restricted to the interval $[0,3]$. 
To evaluate this integral, we apply the   substitution $v = \frac{1}{3-x}$, which gives 
\begin{equation*}
    I = \int \frac{dv}{\sqrt{72v^2 - 18v + 1}} = \frac{1}{6\sqrt{2}} \ln \left| 144v - 18 + 12\sqrt{2}\sqrt{72v^2 - 18v + 1} \right|.
\end{equation*}
Reverting the substitution $v = \frac{1}{3-x}$, we obtain 
\begin{equation*}
    \pm \frac{s - s_0}{3\rho} = \frac{1}{6\sqrt{2}} \ln \left| \frac{144 - 18(3-x) + 12\sqrt{2}\sqrt{x^2+12x+27}}{3-x} \right| \Bigg|_{x=u_0}^{x=u(s)},
\end{equation*}
After some manipulations, we absorb the initial conditions into a single positive integration constant $C$. Then the 
solution takes the exponential form:
\begin{equation*}
       \frac{15 + 3x + 2\sqrt{2}\sqrt{x^2+12x+27}}{3-x}=C e^{\pm \frac{2\sqrt{2}}{\rho} s}=Y(s).
\end{equation*}
We can rearrange this expression to isolate the square root and square both sides, yielding a quadratic equation in $x$:
\begin{equation*}
    \left[ Y(s)(3-x) - 3(x+5) \right]^2 = 8(x^2 + 12x + 27).
\end{equation*} 
Simplifying, we obtain
\begin{equation*}
    (Y^2 + 6Y + 1)x^2 - 6(Y-1)^2 x + 9(Y^2 - 10Y + 1) = 0.
\end{equation*}
The discriminant $\Delta$ of this quadratic equation is  a perfect square, namely, $\Delta= (48Y)^2$. The roots of the equation are   $x = \frac{6(Y-1)^2 \pm 48Y}{2(Y^2 + 6Y + 1)}$. One of them is   $x = 3$,  which implies $\tau = 0$. This case is discarded. The other root is  
\begin{equation*}
    x(s) = 3 \frac{Y^2 - 10Y + 1}{Y^2 + 6Y + 1}.
\end{equation*}
Substituting this   form of $x(s)$ into the definition $\rho^2\tau^2 = \frac{1}{4}(9 - x^2)$, we conclude the proof.
\end{proof}

\section{Explicit examples of cylindrical curves}\label{s4}
In this final section, we show  the robustness of the characterization presented in Theorem \ref{t1} by applying it to some of the most famous curves  lying on circular cylinders.

\subsection{Planar curves (ellipse)}   
Suppose that $\bfr$ is an ellipse, for which we know its curvature $\kappa$ and its torsion, $\tau=0$. If $\kappa$ is constant, $\kappa(s)=\kappa_0>0$, then $\bfr$ is a circle of radius $1/\kappa_0$, hence it is included in a circular cylinder of radius $1/\kappa_0$. Suppose from now on that $\bfr$ is not a circle, that is, $\kappa$ is not constant. From \eqref{p1}, we deduce   
\[ \frac{\psi'}{\psi} = \frac{2}{3} \frac{\kappa'}{\kappa}. \]
Integrating yields
\begin{equation}\label{p5}
 \psi(s) = c   \kappa(s)^{2/3}, 
 \end{equation}
where $c$ is an integration constant. We calculate the value of $c$ by evaluating $\kappa$   at the vertex of the major axis  that has the greatest height, where   the tangent vector is perpendicular to the   axis of the cylinder,  meaning $\alpha = \pi/2$. This gives   $\psi = 1$.  Substituting this into \eqref{p5}, we find $ c = \kappa_{\max}^{-2/3}$. Then  \eqref{p5} becomes
\[ \psi(s) = \left( \frac{\kappa(s)}{\kappa_{\max}} \right)^{2/3}. \]

 By substituting \eqref{p5} into \eqref{eq7}, we obtain 
\[ \frac{c}{9} \kappa^{-4/3} \kappa'^2 = \frac{1}{\rho^2} (1 - c \kappa^{2/3}) (\rho^2\kappa^2 - c^2 \kappa^{4/3}). \]
We can explicitly recover the radius of the cylinder  by evaluating this equation at the minimum $\kappa_{\min}$ of $\kappa$.  At such a point,  the left-hand side is zero. In the   right-hand side,   the first factor vanishes when $\kappa_{min} = c^{-3/2} = \kappa_{\max}$. This implies that the initial curve is a circle. This case was discarded.  Thus,  the second factor must   vanish, yielding the radius of $\cy$: 
\begin{equation}\label{k5}
\rho = c \kappa_{\min}^{-1/3}=\kappa_{\max}^{-2/3}\kappa_{\min}^{-1/3}. 
\end{equation}
This  intrinsic deduction is in perfect agreement with classical geometry. Recalling that the extreme curvatures of an ellipse with major semi-axis $a$ and minor semi-axis $b$ are   $\kappa_{\max} = a/b^2$ and $\kappa_{\min} = b/a^2$, respectively, substituting these classical values into \eqref{k5}   yields $\rho = b$. This   confirms that the minor semi-axis of the elliptical section corresponds precisely to the radius of the   cylinder.

\subsection{Viviani's curve}

The Viviani curve is defined as the intersection of a sphere of radius $r=2\rho$ and a circular cylinder of radius $\rho$, where the axis of the cylinder  passes through the center of the sphere. This curve provides a sophisticated test case because its curvature and torsion    must simultaneously satisfy the spherical characterization \eqref{es} and our cylindrical constraint \eqref{eq7} as well as  the ODE \eqref{p1}.

First, we address the spherical condition.   Given the pair $(\kappa, \tau)$, the curve lies on a sphere of radius $r$ if and only if it satisfies \eqref{es}. Thus, the function
\begin{equation*}
  \frac{1}{\kappa^2} + \left( \frac{\kappa'}{\kappa^2 \tau} \right)^2  
\end{equation*}
must be constant. If this constant is $r^2$, with $r>0$,   we can write $\tau$ as 
$$
    \tau = \frac{\kappa'}{\kappa \sqrt{r^2 \kappa^2 - 1}}.
$$
On the other hand, if   the curve is included in a cylinder of radius $\rho=r/2$, then $\kappa$ and $\tau$ must satisfy the ODE \eqref{e1}. To demonstrate the robustness of the algebraic constraint and the identification algorithm, we can verify this for an explicit parametrization. We consider the standard parametrization of the Viviani curve, where $\rho$ denotes the radius of the generating cylinder,
\begin{equation*}
    \mathbf{r}(t) = \rho \left( 1 + \cos t, \, \sin t, \, 2 \sin(t/2) \right).
\end{equation*}
See \cite{gray}. While the prime denotes the derivative with respect to the arc-length $s$, the dot notation is used to represent derivatives with respect to $t$.  Since this parametrization is not by arc length, we define the speed factor 
\begin{equation*}
    v(t) = \|\dot{\mathbf{r}}(t)\|= \rho \sqrt{\frac{3 + \cos t}{2}}.
\end{equation*}
The curvature and torsion are 
$$
    \kappa(t) = \frac{\sqrt{13 + 3\cos t}}{\rho (3 + \cos t)^{3/2}}, \quad \tau(t) = \frac{6 \cos(t/2)}{\rho (13 + 3\cos t)}.
$$
Then
$$
    \kappa'(s) = \frac{\dot{\kappa}(t)}{v(t)} = \frac{3\sqrt{2} \sin t (7 + \cos t)}{2\rho^2 (3 + \cos t)^{3} \sqrt{13 + 3\cos t}}.
$$
It is immediate that   $\bfr$ is a spherical curve because $\kappa$ and $\tau$ satisfy   \eqref{es}. Moreover,   $r^2 =   2\rho$. This gives the   radius of the candidate cylinder, namely, $\rho$. Here we can proceed in two different ways. 

First, we can consider the ODE \eqref{p1}. After some computations, we arrive at 
\begin{equation} \label{31}
    \dot{\psi} = \frac{2\sin t (5 + \cos t)}{(3 + \cos t)(13 + 3\cos t)} \psi \pm \frac{2 \sqrt{1 + \cos t} \sqrt{3 + \cos t}}{13 + 3\cos t} \sqrt{ \frac{13 + 3\cos t}{(3 + \cos t)^3} - \psi^2 }.
\end{equation}
We introduce the change of variables 
$$u(t)=\psi(t)(3+\cos t).$$
After a long computation, equation   \eqref{31} becomes 
\begin{equation} \label{eq:simplified_ODE}
    (13 + 3\cos t)\dot{u} + u\sin t = \pm 2\sqrt{1 + \cos t} \sqrt{ 13 + 3\cos t - (3 + \cos t)u^2 }.
\end{equation} 
 Instead of attempting a direct integration, we propose the constant  ansatz    $u(t) = 2$. This function trivially satisfies \eqref{eq:simplified_ODE}.  Thus, for $u(t)=2$, we find
\begin{equation*}
    \psi(t) = \frac{2}{3 + \cos t}.
\end{equation*}
Note that we have obtained $\psi(t)$ from \eqref{p1}. Now we must verify its consistency with equation \eqref{eq7}. Substituting this expression of $\psi(t)$ into the algebraic constraint leads to a lengthy computation, which can be readily verified via symbolic computation software. This confirms that the identity \eqref{eq7} holds perfectly.

Alternatively,   one can determine $\psi(t)$ directly  by evaluating the projection of the tangent vector onto the vertical direction $(0,0,1)$. We have 
\begin{equation*}
    \cos^2 \alpha = \langle \bft, (0,0,1) \rangle^2 = \frac{\dot{z}(t)^2}{v(t)^2} = \frac{\cos^2(t/2)}{1 + \cos^2(t/2)},
\end{equation*}
which leads to 
\[     \psi(t) =  \frac{2}{3 + \cos t}.\]
  Again,     cumbersome and lengthy algebraic calculations verify that $\psi$ satisfies \eqref{p1} and \eqref{eq7}.  
 This  confirms that the Viviani curve perfectly couples both spherical and cylindrical constraints, validating our ODE as a robust framework for characterizing complex geometric intersections.

\section*{Acknowledgements}
The author is   grateful to the anonymous referee for their constructive suggestions, which helped to  improve the final version of the manuscript.  The author also thanks to Luiz C. B. Da Silva (Dundee) for his useful discussions in the preparation of this work. The author   has been partially supported by MINECO/MICINN/FEDER grant no. PID2023-150727NB-I00,  and by the ``Mar\'{\i}a de Maeztu'' Excellence Unit IMAG, reference CEX2020-001105- M, funded by MCINN/AEI/10.13039/ 501100011033/ CEX2020-001105-M.

\section*{Declaration of competing interest}
No conflict of interest exists in the submission of this manuscript, and manuscript is approved by all authors for publication. No
funding was provided for this project. No data was used for the research described in the article.

\end{document}